\numberwithin{equation}{section}
\newtheorem{lem}{Lemma}
\newtheorem{thm}{Theorem}
\newtheorem{cor}{Corollary}
\newtheorem{rem}{Remark}
\begin{document}

\begin{large}
\centerline{\Large \bf The Vandermonde Determinant of the Divisors}
\centerline{\Large \bf of an Integer}
\end{large}
\vskip 10pt
\begin{large}
\centerline{\sc  Patrick Letendre}
\end{large}
\vskip 10pt
\begin{abstract}
Let $1=d_{1}<d_{2}< \cdots < d_{\tau(n)}=n$ denote the ordered sequence of the positive divisors of an integer $n$. We are interested in estimating the arithmetic function
$$
V(n) := \prod_{1 \le i < j \le \tau(n)}(d_{j}-d_{i}) \quad (n \ge 1).
$$
\end{abstract}
\vskip 10pt
\noindent AMS Subject Classification numbers: 11N37, 11N56, 11N64.

\noindent Key words: divisors, Vandermonde.

\section{Introduction and notation}

Let $1=d_{1}<d_{2}<\cdots < d_{\tau(n)}=n$ denote the ordered sequence of the positive divisors of $n$. Let $x_{1},\dots,x_{s}$ be arbitrary complex numbers. The Vandermonde matrix is defined by
$$
\mathcal{V}(x_{1},\dots,x_{s}):=\begin{bmatrix}
1 & x_{1} & x_{1}^{2} & \cdots & x_{1}^{s-1} \\
1 & x_{2} & x_{2}^{2} & \cdots & x_{2}^{s-1} \\
\vdots & \vdots & \vdots & \ddots & \vdots \\
1 & x_{s} & x_{s}^{2} & \cdots & x_{s}^{s-1}
\end{bmatrix}
$$
and it is well known that
$$
\det \mathcal{V}(x_{1},\dots,x_{s}) = \prod_{1 \le i < j \le s}(x_{j}-x_{i}).
$$
In this article, we study the function
\begin{eqnarray*}
V(n) & := & \det \mathcal{V}(d_{1},\dots,d_{\tau(n)})\\
& = & \prod_{1 \le i < j \le \tau(n)}(d_{j}-d_{i})
\end{eqnarray*}
for each integer $n \ge 1$. Our goal is to estimate the order of magnitude of $V(n)$. Our estimate will involve the arithmetic function
$$
\Omega_{2}(n):=\sum_{p^{\alpha} \| n}\alpha^{2} \quad (n \ge 1).
$$

\begin{thm}\label{thm:1}
For every integer $n \ge 2$, we have
$$
\frac{\tau(n)^{2}}{4}(\log n)\Bigl(1+O\Bigl(\frac{1}{\log n}+\frac{1}{\sqrt{\Omega_{2}(n)}}\Bigr)\Bigr) \le \log V(n) \le  \frac{3\tau(n)^{2}}{8}(\log n).
$$
\end{thm}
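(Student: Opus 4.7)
The plan is to decompose
$$
\log V(n) \;=\; S(n) - R(n), \qquad
S(n) := \sum_{j=1}^{\tau(n)} (j-1)\log d_j, \quad
R(n) := \sum_{1 \le i < j \le \tau(n)} \log\frac{d_j}{d_j-d_i} \;\ge\; 0,
$$
which comes from writing $d_j - d_i = d_j(1 - d_i/d_j)$. The upper bound then follows from $R(n) \ge 0$ together with a bound $S(n) \le \tfrac{3\tau(n)^2}{8}\log n$, and the lower bound additionally requires a matching lower estimate for $S(n)$ and an upper estimate for $R(n)$.

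Both bounds on $S(n)$ exploit the divisor involution $d \leftrightarrow n/d$: we have $d_j d_{\tau+1-j} = n$, so $d_j \le \sqrt n$ for $j \le \tau/2$. Splitting the defining sum at $\lfloor\tau/2\rfloor$ and substituting $j \mapsto \tau+1-j$ in the upper half yields, for $\tau(n) = 2m$,
$$
S(n) \;=\; \frac{3m^2-m}{2}\log n \;+\; \sum_{j=1}^{m}(2j-1-2m)\log d_j,
$$
where each coefficient $2j-1-2m$ is strictly negative and $0 \le \log d_j \le \tfrac12\log n$. Bounding $\log d_j$ below by $0$ gives $S(n) \le \tfrac{3m^2-m}{2}\log n \le \tfrac{3\tau^2}{8}\log n$; bounding it above by $\tfrac12\log n$ (and noting $\sum_{j=1}^m (2j-1-2m) = -m^2$) gives $S(n) \ge \tfrac{\tau^2}{4}\log n - \tfrac{\tau}{4}\log n$. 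An analogous computation (isolating $d_{m+1} = \sqrt n$) handles odd $\tau$. Since $\tau(n)^2 = \prod_p(\alpha_p+1)^2 \ge \prod_p(\alpha_p^2+1) \ge 1 + \Omega_2(n)$, we have $\tau(n) \ge \sqrt{\Omega_2(n)}$, so the $\tau/4$ loss is absorbed into the allowed error $O(\tau^2/\sqrt{\Omega_2(n)})$.

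It therefore remains to prove $R(n) \ll \tau(n)^2 \log n/\sqrt{\Omega_2(n)}$, which is the genuine content of the lower bound and where I expect the main difficulty. Expanding $-\log(1-x) = \sum_{k \ge 1} x^k/k$ gives $R(n) = \sum_{k \ge 1} k^{-1} \sum_{i<j}(d_i/d_j)^k$, and the contribution is dominated by pairs with $d_j/d_i$ near $1$; the reflection $(i,j) \leftrightarrow (\tau+1-j, \tau+1-i)$ preserves each ratio $d_j/d_i$ and thus pairs such contributions. Heuristically, viewing $\log d$ as a random variable under the uniform distribution on divisors of $n$, so that $\log d = \sum_p \beta_p \log p$ with $\beta_p$ independent and uniform on $\{0,\dots,\alpha_p\}$, its variance $\tfrac{1}{12}\sum_p \alpha_p(\alpha_p+2)(\log p)^2$ is tightly linked to $\Omega_2(n)$. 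A quantitative estimate, of Erd\H{o}s--Tenenbaum type, on the number of divisor pairs whose logarithms lie in a short window, ultimately reflecting this variance, should yield the required bound on $R(n)$ and complete the proof of Theorem~\ref{thm:1}.
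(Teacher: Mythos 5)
Your decomposition $\log V(n)=S(n)-R(n)$ with $R(n)=\sum_{i<j}\log\frac{d_j}{d_j-d_i}\ge 0$ is exactly the paper's strategy, and your treatment of $S(n)$ is correct: the paper's Lemma \ref{lem:2} gets the clean bound $S(n)\ge\frac{\tau(n)^2}{4}\log n$ by adding $S(n)$ to its reflection $S^*(n)=\sum_i(\tau(n)-i)\log d_i$, whereas your direct pairing loses a harmless $\frac{\tau(n)}{4}\log n$ which you correctly absorb via $\tau(n)\ge\sqrt{\Omega_2(n)}$. The upper bound $\log V(n)\le S(n)\le\frac{3\tau(n)^2}{8}\log n$ is fine as you state it.

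The genuine gap is the estimate $R(n)\ll \tau(n)^2\log n/\sqrt{\Omega_2(n)}+\tau(n)^2$, which you explicitly leave as a heuristic (``should yield the required bound''); this is the entire content of the lower bound. The paper handles it by invoking Lemma \ref{lem:1} (Lemma 6 of the cited preprint arXiv:2404.17424): \emph{every} dyadic interval $[X,2X]$ contains $\ll \tau(n)/\sqrt{\Omega_2(n)}$ divisors of $n$. Then, for fixed $j$, each $d_i\in(d_j/2,d_j)$ contributes at most $\log n$ to $R(n)$ (since $d_j-d_i\ge 1$ forces $1-d_i/d_j\ge 1/n$) and there are $\ll\tau(n)/\sqrt{\Omega_2(n)}$ such $i$, while each $d_i\le d_j/2$ contributes $O(1)$; summing over $j$ gives the bound. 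Your proposed route via the variance of $\log d$ (Lemma \ref{lem:3} in the paper, or your probabilistic reformulation) cannot deliver this on its own: a second-moment/Chebyshev argument bounds how many divisors lie \emph{far} from $\frac{\log n}{2}$, but says nothing to prevent a positive proportion of divisors from piling into a single window of length $\log 2$ near the median. The dyadic concentration bound is of a different (Sperner/antichain, Erd\H{o}s--Tenenbaum) nature — e.g.\ for squarefree $n$ with $k$ prime factors it is essentially the maximal-antichain bound $\binom{k}{\lfloor k/2\rfloor}\asymp \tau(n)/\sqrt{k}$ — and it is a nontrivial imported result, not a consequence of the variance computation. Without proving (or at least citing) such a concentration estimate and then running the ``$d_j-d_i\ge1$'' argument above, your proof of the lower bound is incomplete.
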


\section{Preliminary results}

\begin{lem}\label{lem:1}
For every integer $n \ge 2$, we have
$$
\max_{X \in \mathbb{R}_{>0}}|\{d \mid n:\ X \le d \le 2X\}| \ll \frac{\tau(n)}{\sqrt{\Omega_{2}(n)}}.
$$
\end{lem}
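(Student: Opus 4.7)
Write $n = p_{1}^{\alpha_{1}}\cdots p_{k}^{\alpha_{k}}$ and fix $X > 0$; set $A = \{d\mid n : X \le d \le 2X\}$. My first observation is that $A$ is nearly an antichain in the divisor lattice of $n$: if $d_{1},d_{2}\in A$ with $d_{1}\mid d_{2}$, then $d_{2}/d_{1}$ is a positive integer in $[1,2]$, so $d_{2}\in\{d_{1},2d_{1}\}$. Consequently $A$ decomposes into $2$-chains $\{d,2d\}$ (or singletons). Letting $T=\{d\in A : 2d\notin A\}$ denote the set of tops, I check that $T$ is an antichain in the divisor lattice (two comparable tops $t_{1}\mid t_{2}$ would force $t_{2}=2t_{1}\in A$, contradicting $t_{1}\in T$) and that $|A|\le 2|T|$.

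It thus suffices to bound the size of any antichain in the divisor lattice of $n$, which is a product of $k$ chains of lengths $\alpha_{1}+1,\dots,\alpha_{k}+1$. By the de Bruijn--Tengbergen--Kruyswijk theorem, the maximum antichain in such a product is a middle rank level (where the rank is $\sum\beta_{i}$), and its size equals the largest coefficient of
$$
F(x) = \prod_{i=1}^{k}(1+x+\cdots+x^{\alpha_{i}}).
$$
Writing this probabilistically, if $B_{1},\dots,B_{k}$ are independent with $B_{i}$ uniform on $\{0,\dots,\alpha_{i}\}$, then $\max_{m}[x^{m}]F(x) = \tau(n)\cdot\max_{m}\Pr(B_{1}+\cdots+B_{k}=m)$, and a direct moment computation gives $V := \mathrm{Var}(B_{1}+\cdots+B_{k}) = \sum_{i}\alpha_{i}(\alpha_{i}+2)/12 \gg \Omega_{2}(n)$.

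The remaining task, which I expect to be the main technical point, is the anticoncentration bound $\max_{m}\Pr(B_{1}+\cdots+B_{k}=m) \ll 1/\sqrt{V}$. I would carry this out by Fourier inversion: the characteristic function $\phi(t)=\prod_{i}\phi_{i}(t)$ with $\phi_{i}(t) = \sin((\alpha_{i}+1)t/2)/((\alpha_{i}+1)\sin(t/2))$ satisfies $|\phi_{i}(t)|\le \exp(-c\,\alpha_{i}(\alpha_{i}+2)\,t^{2})$ near $t=0$ by Taylor expansion, giving $|\phi(t)|\le e^{-cVt^{2}}$ there and hence a contribution of order $1/\sqrt{V}$ to $\Pr(\sum_{i} B_{i}=m)\le\frac{1}{2\pi}\int_{-\pi}^{\pi}|\phi(t)|\,dt$; the tail $|t|\ge\varepsilon$ is handled by standard Fourier estimates and is the delicate part. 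A clean alternative is to exploit log-concavity: each factor of $F$ is log-concave, convolution preserves log-concavity, so the normalized sequence $c_{m}/\tau(n)$ is a symmetric log-concave probability distribution on $\mathbb{Z}$ with variance $V$, and the maximum value of such a distribution is $\ll 1/\sqrt{V}$ by a standard inequality. Either route yields $|A|\le 2|T|\ll\tau(n)/\sqrt{V}\ll\tau(n)/\sqrt{\Omega_{2}(n)}$, which is the lemma.
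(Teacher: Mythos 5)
Your argument is correct, and it is necessarily a different route from the paper's, because the paper gives no internal proof at all: it simply cites Lemma 6 of the author's preprint \cite{pl}. Your proof is self-contained and combinatorial/probabilistic: the reduction from a dyadic block $\{d\mid n:\ X\le d\le 2X\}$ to an antichain via the set of ``tops'' is sound (comparable elements of the block differ exactly by a factor $2$, and $4d>2X$ guarantees each top has at most two preimages, so $|A|\le 2|T|$); the de Bruijn--Tengbergen--Kruyswijk theorem correctly bounds $|T|$ by the largest coefficient of $\prod_i(1+x+\cdots+x^{\alpha_i})$; the identification of that coefficient with $\tau(n)\max_m\Pr(B_1+\cdots+B_k=m)$ and the variance computation $V=\sum_i\alpha_i(\alpha_i+2)/12\gg\Omega_2(n)$ are exact. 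For the final anticoncentration step, you should commit to the log-concavity route rather than the Fourier one: the sequence of level sizes is a symmetric log-concave distribution on $\mathbb{Z}$, and for such distributions $\max_m p_m\ll V^{-1/2}$ indeed holds (and can even be proved in a few lines: if $m_0$ is the largest $m$ with $p_m\ge p^*/2$ then $m_0\ll 1/p^*$, and log-concavity gives geometric decay beyond scale $m_0$, whence $V\ll (p^*)^{-2}$). The Fourier variant as sketched is the weak link: the Gaussian bound $|\phi_i(t)|\le\exp(-c\alpha_i(\alpha_i+2)t^2)$ is valid only in a range $|t|\lesssim 1/\alpha_i$ depending on $i$, and the crude bound $\Pr(S=m)\le\frac{1}{2\pi}\int_{-\pi}^{\pi}|\phi(t)|\,dt$ loses a logarithm when one exponent dominates (already for $n=p^{\alpha}$ the $L^1$ norm of the Dirichlet-type kernel is of order $(\log\alpha)/\alpha$, not $1/\alpha$), so it does not deliver the stated bound without extra work. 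With the log-concave argument retained, your proof is complete and has the merit of being elementary and independent of the external reference; what the citation buys the paper is brevity, and possibly a sharper or more general statement in \cite{pl}, but your chain antichain--Sperner--anticoncentration gives exactly the inequality needed here.
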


\begin{proof}
See Lemma 6 from \cite{pl}.
\end{proof}

Let us consider the function
$$
S(n) := \sum_{i=1}^{\tau(n)}(i-1)\log d_{i} \quad (n \ge 1).
$$

In what follows, we will use the well-known identity
\begin{equation}\label{eq:1}
\sum_{d \mid n}\log d =\frac{\tau(n)}{2}\log n.
\end{equation}

\begin{lem}\label{lem:2}
For every integer $n \ge 1$, we have
$$
\frac{\tau(n)^{2}}{4}(\log n) \le S(n) \le  \frac{3\tau(n)^{2}}{8}(\log n).
$$
\end{lem}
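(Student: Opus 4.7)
My plan is to exploit the involution $d_i \leftrightarrow d_{\tau(n)+1-i} = n/d_i$, which in logarithmic form reads $\log d_i + \log d_{\tau(n)+1-i} = \log n$. Write $\tau = \tau(n)$ for brevity. Substituting $i \mapsto \tau+1-i$ in the definition of $S(n)$ produces a second expression $\sum_{i=1}^{\tau}(\tau-i)(\log n - \log d_i)$; adding it to the original definition and using identity \eqref{eq:1} to evaluate $\sum \log d_i$ will yield the symmetric form
$$
2S(n) = \frac{\tau(\tau-1)}{2}\log n + \sum_{i=1}^{\tau}(2i-1-\tau)\log d_i.
$$

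Next I would collapse the remaining sum by pairing index $i$ with $\tau+1-i$, invoking the involution one more time. Each such pair contributes $(\tau+1-2i)(\log n - 2\log d_i)$, and for $1 \le i \le \lfloor \tau/2 \rfloor$ both factors are nonnegative because $d_i \le \sqrt{n}$ in that range (when $\tau$ is odd the middle index contributes $0$). This gives
$$
\sum_{i=1}^{\tau}(2i-1-\tau)\log d_i \;=\; \sum_{i=1}^{\lfloor \tau/2 \rfloor}(\tau+1-2i)(\log n - 2\log d_i) \;\ge\; 0.
$$

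For the lower bound I would keep only the $i=1$ contribution: since $d_1 = 1$ it equals $(\tau-1)\log n$, and discarding the rest of this nonnegative sum gives $2S(n) \ge \frac{(\tau-1)(\tau+2)}{2}\log n$; the algebraic simplification $(\tau-1)(\tau+2) = \tau^2 + \tau - 2 \ge \tau^2$ valid for $\tau \ge 2$ then closes the gap, and the case $n = 1$ is vacuous. For the upper bound I would use the crude estimate $\log n - 2\log d_i \le \log n$ together with the elementary identity $\sum_{i=1}^{\lfloor\tau/2\rfloor}(\tau+1-2i) = \lfloor \tau^2/4 \rfloor \le \tau^2/4$ (a sum of consecutive odd or even numbers), to conclude $2S(n) \le \frac{\tau(\tau-1)}{2}\log n + \frac{\tau^2}{4}\log n = \frac{3\tau^2 - 2\tau}{4}\log n$, and hence $S(n) \le \frac{3\tau^2 - 2\tau}{8}\log n \le \frac{3\tau^2}{8}\log n$.

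I do not foresee any real obstacle. A naive application of Chebyshev's sum inequality to the increasing sequences $(i-1)$ and $(\log d_i)$ together with \eqref{eq:1} already yields $S(n) \ge \frac{\tau(\tau-1)}{4}\log n$, which falls short of the claimed lower bound by exactly $\frac{\tau}{4}\log n$; the only subtlety is to recover that missing term, and the pairing identity above does so essentially for free by extracting the $d_1 = 1$ end-effect.
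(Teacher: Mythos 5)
Your proof is correct and follows essentially the same route as the paper: both arguments rest on the involution $d \mapsto n/d$ together with the identity \eqref{eq:1}, pairing $d_{i}$ with $d_{\tau(n)+1-i}$, extracting the $d_{1}=1$ endpoint term for the lower bound, and using $\log d_{i}\ge 0$ (i.e.\ $\log n - 2\log d_{i} \le \log n$) together with $\sum_{i\le\lfloor\tau(n)/2\rfloor}(\tau(n)+1-2i)=\lfloor\tau(n)^{2}/4\rfloor$ for the upper bound. The only difference is organizational: you run both bounds through a single symmetrized identity for $2S(n)$, which in passing makes explicit the inequality $S(n)\ge S^{*}(n)+(\tau(n)-1)\log n$ that the paper only asserts, whereas the paper handles the two bounds by two separate (but equivalent) pairing decompositions.
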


\begin{proof}
We begin with the lower bound. Define the auxiliary function
$$
S^{*}(n):=\sum_{i=1}^{\tau(n)}(\tau(n)-i)\log d_{i}.
$$
We then write
\begin{eqnarray*}
2S(n) & \ge & S(n)+S^{*}(n)+(\tau(n)-1)\log n\\
& = & (\tau(n)-1)\sum_{i=1}^{\tau(n)}\log d_{i}+(\tau(n)-1)\log n\\
& = & \frac{(\tau(n)-1)(\tau(n)+2)}{2}\log n\\
& \ge & \frac{\tau(n)^{2}}{2}(\log n)
\end{eqnarray*}
and the desired lower bound follows.

We now turn to the upper bound. Let $1_{\square}(n)$ denote the characteristic function of perfect squares. Using the symmetry relation $\log(d)+\log(n/d)=\log(n)$, we pair the divisors accordingly and obtain
\begin{eqnarray}\nonumber
S(n) & = & \Bigl(\frac{\lfloor\frac{\tau(n)}{2}\rfloor(\lfloor\frac{\tau(n)}{2}\rfloor-1)}{2}+\frac{1_{\square}(n)}{2}\Bigr)\log n+\sum_{i=1}^{\lfloor\frac{\tau(n)}{2}\rfloor}(\tau(n)+1-2i)\log d_{\tau(n)+1-i}\\ \label{lem:2:1}
& \le & \Bigl(\frac{\lfloor\frac{\tau(n)}{2}\rfloor(\lfloor\frac{\tau(n)}{2}\rfloor-1)}{2}+\frac{1_{\square}(n)}{2}\Bigr)\log n+(\log n)\sum_{i=1}^{\lfloor\frac{\tau(n)}{2}\rfloor}(\tau(n)+1-2i)\\ \nonumber
& \le & \frac{3\tau(n)^{2}}{8}\log n.
\end{eqnarray}
\end{proof}

Lemma \ref{lem:2} plays a central role in the proof of Theorem \ref{thm:1}. To confirm its optimality, we prove a more precise result in Corollary \ref{cor:1} below.

\begin{lem}\label{lem:3}
For every integer $n \ge 1$, we have
$$
\sum_{d \mid n}\Bigl(\log d - \frac{\log n}{2}\Bigr)^{2}=\tau(n)\sum_{p^{\alpha} \| n}(\log p^{\alpha})^{2}\frac{(\alpha+2)}{12\alpha}.
$$
\end{lem}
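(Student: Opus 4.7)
The plan is to exploit the multiplicative structure via the canonical factorization $n=\prod_{k}p_{k}^{\alpha_{k}}$. Every divisor is uniquely $d=\prod_{k}p_{k}^{\beta_{k}}$ with $0\le\beta_{k}\le\alpha_{k}$, so
$$
\log d - \tfrac{1}{2}\log n \;=\; \sum_{k}\bigl(\beta_{k}-\tfrac{\alpha_{k}}{2}\bigr)\log p_{k}.
$$
This turns the left-hand side into a second moment of a sum of independent ``coordinates,'' which is the natural setting for the identity.

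First I would square this expression and split the resulting double sum into a diagonal piece (indexed by a single $k$) and an off-diagonal piece (indexed by pairs $k\neq l$). Summing over all divisors of $n$ corresponds to summing each $\beta_{k}$ independently from $0$ to $\alpha_{k}$, with a factor $(\alpha_{l}+1)$ produced by each inert coordinate. The key observation is that, by the symmetry $\beta\mapsto \alpha-\beta$,
$$
\sum_{\beta=0}^{\alpha}\bigl(\beta-\tfrac{\alpha}{2}\bigr)=0,
$$
so every off-diagonal term vanishes after summation. This reduces the problem to the diagonal contribution.

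Next, for the diagonal term indexed by $k$, the product over $l\neq k$ contributes $\prod_{l\neq k}(\alpha_{l}+1)=\tau(n)/(\alpha_{k}+1)$, and I am left with the elementary moment
$$
\sum_{\beta=0}^{\alpha}\bigl(\beta-\tfrac{\alpha}{2}\bigr)^{2}\;=\;\frac{\alpha(\alpha+1)(\alpha+2)}{12},
$$
which follows by expanding and applying the standard formulas for $\sum\beta$ and $\sum\beta^{2}$. The $(\alpha_{k}+1)$ factors cancel, giving
$$
\sum_{d\mid n}\Bigl(\log d-\tfrac{\log n}{2}\Bigr)^{2}=\tau(n)\sum_{k}\frac{\alpha_{k}(\alpha_{k}+2)}{12}(\log p_{k})^{2}.
$$
Rewriting $\alpha_{k}(\log p_{k})^{2}=(\log p_{k}^{\alpha_{k}})^{2}/\alpha_{k}$ produces the stated right-hand side.

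No genuine obstacle is expected: the only step requiring care is verifying the vanishing of the cross terms, which is immediate from the centering $\beta_{k}-\alpha_{k}/2$, and the closed-form evaluation of $\sum_{\beta=0}^{\alpha}(\beta-\alpha/2)^{2}$, which is routine algebra. The proof is essentially a variance computation for the uniform distribution on the divisor lattice, made possible by the additive structure of $\log d$.
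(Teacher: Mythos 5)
Your proof is correct, and it takes a genuinely different route from the paper. You center first: writing $\log d-\tfrac{1}{2}\log n=\sum_{k}(\beta_{k}-\tfrac{\alpha_{k}}{2})\log p_{k}$ over the divisor lattice, the cross terms die immediately because $\sum_{\beta=0}^{\alpha}(\beta-\tfrac{\alpha}{2})=0$, and the diagonal reduces to the elementary moment $\sum_{\beta=0}^{\alpha}(\beta-\tfrac{\alpha}{2})^{2}=\tfrac{\alpha(\alpha+1)(\alpha+2)}{12}$, after which the factors $\alpha_{k}+1$ cancel against $\tau(n)$; all the identities you invoke check out, including the final rewriting $\alpha(\log p)^{2}=(\log p^{\alpha})^{2}/\alpha$. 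The paper instead expands the square, reduces to the uncentered moment $\sum_{d\mid n}(\log d)^{2}$ via the identity \eqref{eq:1}, and evaluates that moment with the von Mangoldt convolution $\log m=\sum_{e\mid m}\Lambda(e)$ together with the count $\tau(n/[e_{1},e_{2}])$; there the distinct-prime cross terms do not vanish but instead assemble into $\tau(n)\bigl(\sum_{p^{\alpha}\|n}\tfrac{\log p^{\alpha}}{2}\bigr)^{2}=\tau(n)\tfrac{(\log n)^{2}}{4}$, which then cancels the subtracted mean-square. Your version is the cleaner variance computation (uniform measure on a product lattice, independent coordinates), avoids $\Lambda$ entirely, and makes the cancellation structural rather than computational; the paper's version has the side benefit of producing the explicit formula for the uncentered moment $\sum_{d\mid n}(\log d)^{2}$ along the way.
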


\begin{proof}
Expanding the square, we obtain
\begin{equation}\label{lem:3:1}
\sum_{d \mid n}\Bigl(\log d - \frac{\log n}{2}\Bigr)^{2}=\sum_{d \mid n}(\log d)^{2}-\tau(n)\frac{(\log n)^{2}}{4}
\end{equation}
using the identity \eqref{eq:1}.

Now, let $\Lambda(\cdot)$ denote the von Mangoldt function. Using the identity $\log m=\sum_{e \mid m}\Lambda(e)$, we write
\begin{eqnarray*}
\sum_{d \mid n}(\log d)^{2} & = & \sum_{d \mid n}\Bigl(\sum_{e \mid d}\Lambda(e)\Bigr)^{2}\\
& = & \sum_{e_{1},e_{2} \mid n}\Lambda(e_{1})\Lambda(e_{2})\sum_{\substack{d \mid n \\ [e_{1},e_{2}] \mid d}}1\\
& = & \sum_{e_{1},e_{2} \mid n}\Lambda(e_{1})\Lambda(e_{2})\tau\Bigl(\frac{n}{[e_{1},e_{2}]}\Bigr)\\
& = & \tau(n)\sum_{p^{\alpha} \| n}\frac{(\log p)^{2}}{\alpha+1}\sum_{a_{1},a_{2}=1}^{\alpha}(1+\alpha-\max(a_{1},a_{2}))\\
&  & +\tau(n)\sum_{\substack{p_{1}^{\alpha},p_{2}^{\beta} \| n \\ p_{1} \neq p_{2}}}\frac{(\log p_{1})(\log p_{2})}{(\alpha+1)(\beta+1)}\sum_{\substack{1 \le a \le \alpha \\ 1 \le b \le \beta}}(1+\alpha-a)(1+\beta-b)\\
& = & \tau(n)\sum_{p^{\alpha} \| n}(\log p^{\alpha})^{2}\frac{(\alpha+2)}{12\alpha}+\tau(n)\Bigl(\sum_{p^{\alpha} \| n}\frac{(\log p^{\alpha})}{2}\Bigr)^{2}.
\end{eqnarray*}
Inserting this into \eqref{lem:3:1} completes the proof.
\end{proof}

While Lemma \ref{lem:3} is enough for our purposes, the interested reader may consult \cite{sd:gt} for a much more detailed study of the distribution of the logarithms of the divisors of smooth integers.

\begin{cor}\label{cor:1}
The set of limit points of the function $\frac{S(n)}{\tau(n)^{2}\log n}$ is the interval $[1/4,3/8]$.
\end{cor}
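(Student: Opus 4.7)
The containment of the limit set in $[1/4, 3/8]$ is immediate from Lemma \ref{lem:2}; the plan for the reverse inclusion is to construct, for each target $t \in [1/4, 3/8]$, an explicit sequence $n_k$ with $\tau(n_k) \to \infty$ and $S(n_k)/(\tau(n_k)^{2}\log n_k) \to t$.

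The key algebraic step is to rewrite $S(n)$ so that it depends only on the \emph{unordered} multiset of divisors. Writing $y_i := \log d_i / \log n$ and combining the identity $\sum_i (i-1) y_i = \sum_{i<j} y_j$ with its mirror $\sum_i (\tau(n)-i) y_i = \sum_{i<j} y_i$, together with \eqref{eq:1}, gives
\begin{equation*}
\frac{S(n)}{\tau(n)^{2}\log n} = \frac{\tau(n)-1}{4\tau(n)} + \frac{1}{4}M(n),\qquad M(n) := \frac{1}{\tau(n)^{2}\log n}\sum_{d, d'\mid n}|\log d - \log d'|.
\end{equation*}
Thus $M(n)$ is the mean absolute pairwise difference of the probability measure $\mu_n$ on $[0,1]$ placing mass $1/\tau(n)$ at each $y_d$; the divisor pairing $d \leftrightarrow n/d$ makes $\mu_n$ symmetric about $1/2$. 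A direct computation on the two-point symmetric measure $\nu_a := \tfrac{1}{2}(\delta_a + \delta_{1-a})$ gives $\mathrm{MAD}(\nu_a) = 1/2 - a$, which attains every value in $[0, 1/2]$ as $a$ varies over $[0, 1/2]$. This reduces the corollary to producing $\mu_{n_k} \to \nu_a$ weakly for each prescribed $a$.

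For $t \in [1/4, 3/8]$ set $a := 3/2 - 4t \in [0, 1/2]$ and $\lambda := 2a \in [0, 1]$. The candidate is $n_k := A_k P_k$, where $A_k = p_1 \cdots p_k$ is a squarefree product of $k$ distinct primes drawn from a short interval $[N_k, 2N_k]$ (which contains at least $k$ primes once $N_k \gg k \log k$, by the prime number theorem), and $P_k$ is an additional prime satisfying $\log P_k / \log A_k \to (1-\lambda)/\lambda$ (Bertrand); the boundary values $t = 1/4$ (take $n_k = A_k$ alone) and $t = 3/8$ (let $\log P_k / \log A_k \to \infty$) are handled by the natural limiting versions. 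Lemma \ref{lem:3} applied to $A_k$ yields $\tau(A_k)^{-1}\sum_{d \mid A_k}(\log d - (\log A_k)/2)^{2} = \tfrac{1}{4}\sum_i (\log p_i)^{2} \ll k(\log N_k)^{2}$, which is $O(1/k)$ relative to $(\log A_k)^{2} \asymp k^{2}(\log N_k)^{2}$. Hence, in the normalized log-scale, the $\tau(A_k)$ divisors of $A_k$ concentrate at $\lambda/2 = a$ and the $\tau(A_k)$ divisors of the form $dP_k$ concentrate at $1-a$, so $\mu_{n_k} \to \nu_a$ weakly. Since $(x,y) \mapsto |x-y|$ is continuous and bounded on $[0,1]^{2}$, weak convergence of $\mu_{n_k} \otimes \mu_{n_k}$ gives $M(n_k) \to 1/2 - a = 4t - 1$, hence $S(n_k)/(\tau(n_k)^{2}\log n_k) \to t$.

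The main obstacle is the algebraic rewriting in the second paragraph: once $S(n)$ is expressed in terms of the unordered measure $\mu_n$, any concern about the sorted order of divisors (notably for $\lambda > 1/2$, where $P_k < A_k$ and the two halves of the divisors of $n_k$ interleave in size) becomes irrelevant, and the remaining work is standard weak convergence of probability measures together with elementary prime-existence bookkeeping.
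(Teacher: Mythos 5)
Your proposal is correct, and the family of examples you build (a ``flat'' squarefree integer $A_k$ whose log-divisors concentrate at the midpoint, times one extra prime $P_k$ whose size is tuned by a parameter) is essentially the same construction the paper uses: the paper takes a generic $n$ with $\max_{p^\alpha\|n}\theta_{p^\alpha}$ small, appeals to Lemma \ref{lem:3} plus a Chebyshev bound to get the concentration \eqref{cor:1:2}, and then multiplies by a prime $p\approx n^{\kappa}$, exactly your $\lambda$-parametrization in the variable $\kappa$. Where you genuinely diverge is in how $S$ is evaluated. The paper keeps the ordered definition $S(N)=\sum_i(i-1)\log d_i$, splits the divisors of $N=pn$ into three classes $\mathcal{D}_1,\mathcal{D}_2,\mathcal{D}_3$, tracks the ranks $i$ within each class, and carries explicit $O(\epsilon(\log N)\tau(N)^2)$ errors before letting $\epsilon\to0$ for fixed $\kappa$, arriving at the limit value $\frac{2+3\kappa}{8+8\kappa}$ (equivalent to your $\frac38-\frac a4$). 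You instead first prove the exact identity $S(n)=\frac{\tau(n)(\tau(n)-1)}{4}\log n+\frac14\sum_{d,d'\mid n}|\log d-\log d'|$ (obtained from $S\pm S^*$, the ``$+$'' half of which is already the paper's Lemma \ref{lem:2} lower-bound trick), which converts the problem into computing the mean absolute difference of the unordered log-divisor measure; weak convergence of $\mu_{n_k}$ to the two-point measure $\nu_a$ then finishes it. Your route buys a cleaner endgame: no rank bookkeeping, no interleaving worries when $P_k<A_k$, and the endpoints $1/4$ and $3/8$ handled directly rather than via closedness of the limit-point set; the paper's route is more quantitative (explicit error terms for a general $n$ with small $\max\theta$) and avoids any appeal to weak convergence. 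The remaining items you defer (enough primes in $[N_k,2N_k]$, Bertrand for $P_k$, $P_k\nmid A_k$, and the diagonal choice of $k$, $N_k$) are routine and do not hide a gap.
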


\begin{proof}
In view of Lemma \ref{lem:2}, it suffices to show that every real number in $[1/4,3/8]$ is a limit point. We begin with a preliminary observation. For each $p^{\alpha} \parallel n$, let $\theta_{p^{\alpha}} = \theta_{p^{\alpha}}(n) := \frac{\log p^{\alpha}}{\log n}$ ($n \ge 2$). For each $\delta \in (0,1/2]$, define the quantity
$$
J_{\delta}(n) := |\{d \mid n:\ |\log d-\frac{\log n}{2}| \ge \delta\log n\}|.
$$
We have
\begin{eqnarray}\nonumber
J_{\delta}(n) & \le & \frac{1}{(\delta\log n)^{2}}\sum_{d \mid n}\Bigl(\log d - \frac{\log n}{2}\Bigr)^{2}\\ \nonumber
& \le & \frac{\tau(n)}{(\delta\log n)^{2}}\sum_{p^{\alpha} \| n}(\log p^{\alpha})^{2}\frac{(\alpha+2)}{12\alpha}\\ \nonumber
& \le & \frac{\tau(n)}{4\delta^{2}}\sum_{p^{\alpha} \| n}\theta_{p^{\alpha}}^{2}\\ \label{cor:1:1}
& \le & \tau(n)\frac{\max_{p^{\alpha} \| n}\theta_{p^{\alpha}}}{4\delta^{2}}.
\end{eqnarray}
Fix $\epsilon > 0$ and an integer $n$ large enough such that $\max_{p^{\alpha} \| n}\theta_{p^{\alpha}} \le 4\epsilon^{3}$. From \eqref{cor:1:1}, we have $J_{\epsilon}(n) \le \epsilon \tau(n)$, which means
\begin{equation}\label{cor:1:2}
|\{d \mid n:\ |\log d-\frac{\log n}{2}| < \epsilon\log n\}| \ge (1-\epsilon)\tau(n).
\end{equation}
Choose $\kappa > 2\epsilon$. There exists a prime $p \in [n^{\kappa},n^{\kappa+\epsilon}]$ that does not divide $n$. Define $N:=pn$. We now partition the divisors of $N$ into three subsets. The set $\mathcal{D}_{1}$ contains the divisors of $n$ satisfying $|\log d-\frac{\log(N/p)}{2}| < \epsilon\log(N/p)$. From \eqref{cor:1:2}, $\mathcal{D}_{1}$ has at least $\frac{1-\epsilon}{2}\tau(N)$ elements. The set $\mathcal{D}_{2}$ consists of the divisors of $N$ of the form $pd$ for $d \in \mathcal{D}_{1}$. In particular, $|\mathcal{D}_{2}|=|\mathcal{D}_{1}|$, and each $d \in \mathcal{D}_{2}$ satisfies the inequality $|\log d-\frac{\log(Np)}{2}| < \epsilon\log(N/p)$. Finally, the set $\mathcal{D}_{3}$ consists of the remaining divisors of $N$, and therefore contains at most $\epsilon\tau(N)$ elements. We thus have
\begin{eqnarray*}
\sum_{\substack{d_{i} \mid N \\ d_{i} \in \mathcal{D}_{1}}}(i-1)\log d_{i} & = & \sum_{\substack{d_{i} \mid N \\ d_{i} \in \mathcal{D}_{1}}}(i-1)\frac{\log(N/p)}{2} +O\Bigl(\epsilon(\log(N/p))\sum_{i \le \tau(N)/2}(i-1)\Bigr)\\
& = & \sum_{i \le \tau(N)/2}(i-1)\frac{\log(N/p)}{2} +O\bigl(\epsilon(\log N)\tau(N)^{2}\bigr)\\
& = &  \frac{\tau(N)^{2}}{8}\frac{\log(N/p)}{2} +O\bigl(\epsilon(\log N)\tau(N)^{2}\bigr).
\end{eqnarray*}
We have used the fact that $d \in \mathcal{D}_{1}$ implies $d < N^{1/2}$ from the assumption $\kappa > 2\epsilon$ and the bound $|\log d-\frac{\log(N/p)}{2}| < \epsilon\log(N/p)$. Also, at the third line, we have use the inequality $\tau(N) \ge 2^{\frac{1}{4\epsilon^{3}}}$ to simplify the expression. Similarly, we have
$$
\sum_{\substack{d_{i} \mid N \\ d_{i} \in \mathcal{D}_{2}}}(i-1)\log d_{i}=\frac{3\tau(N)^{2}}{8}\frac{\log(Np)}{2}+O\bigl(\epsilon(\log N)\tau(N)^{2}\bigr).
$$
Finally,
$$
\sum_{\substack{d_{i} \mid N \\ d_{i} \in \mathcal{D}_{3}}}(i-1)\log d_{i} \le \epsilon \tau(N)^{2}\log N.
$$
It follows that
\begin{eqnarray*}
S(N) & = & \frac{\tau(N)^{2}}{8}\frac{\log(N/p)}{2}+\frac{3\tau(N)^{2}}{8}\frac{\log(Np)}{2}+O\bigl(\epsilon(\log N)\tau(N)^{2}\bigr)\\
& = & \frac{\tau(N)^{2}}{4}(\log N)+\frac{\tau(N)^{2}}{8}(\log p)+O\bigl(\epsilon(\log N)\tau(N)^{2}\bigr)\\
& = & \frac{\tau(N)^{2}}{4}(\log N)+\frac{\tau(N)^{2}}{8}\Bigl(\frac{\kappa}{1+\kappa}\log N\Bigr)+O\bigl(\epsilon(\log N)\tau(N)^{2}\bigr)\\
& = & \frac{2+3\kappa}{8+8\kappa}\tau(N)^{2}(\log N)+O\bigl(\epsilon(\log N)\tau(N)^{2}\bigr).
\end{eqnarray*}
The result follows by letting $\epsilon \rightarrow 0$ for a fixed $\kappa$ in this construction.
\end{proof}

\begin{rem}
One can show that when $\tau(n)$ is sufficiently large, the only way for the function $\frac{S(n)}{\tau(n)^2 \log n}$ to be close to $\frac{3}{8}$ is that $n$ has a dominant prime factor $p$ such that $p \| n$. Indeed, in that case, this forces the logarithms of exactly half of the divisors to be close to $\log n$. Otherwise, there are two possibilities: either 
$$
\max_{p^{\alpha} \| n} \theta_{p^\alpha} \le 1 - t
$$ 
for some $0 < t \le \frac{1}{3}$, or 
$$
\max_{p^{\alpha} \| n} \theta_{p^\alpha} > 1 - t
$$
but with the maximum attained at some $p^{\alpha} \| n$ with $\alpha \ge 2$. In both cases, by an argument similar to that leading to \eqref{cor:1:1}, one finds a positive density of divisors of $n$ whose logarithms lie significantly close to $\frac{\log n}{2}$. Therefore, by the symmetry around $\frac{\log n}{2}$, that is, the fact that $|\log(d)-\frac{\log n}{2}| = |\log(n/d)-\frac{\log n}{2}|$, we obtain a positive density of divisors $d$ such that
$$
\log d \in \Bigl[ \frac{\log n}{2}, (1 - c)\log n \Bigr]
$$
for some constant $c > 0$ depending on $t$. Thus, using the first line of \eqref{lem:2:1}, we deduce that $\frac{S(n)}{\tau(n)^2 \log n}$ is significantly smaller than $\frac{3}{8}$.
\end{rem}

\section{Proof of Theorem \ref{thm:1}}

Fix an integer $2 \le j \le \tau(n)$. Using Lemma \ref{lem:1}, we write
\begin{eqnarray*}
\sum_{1 \le i < j}|\log (d_{j}-d_{i})-\log(d_{j})| & = &\sum_{\substack{1 \le i < j \\ d_{i}>d_{j}/2}}|\log\Bigl(1-\frac{d_{i}}{d_{j}}\Bigr)|+\sum_{\substack{1 \le i < j \\ d_{i}\le d_{j}/2}}|\log\Bigl(1-\frac{d_{i}}{d_{j}}\Bigr)|\\
& \ll & \frac{\tau(n)\log n}{\sqrt{\Omega_{2}(n)}}+\sum_{\substack{1 \le i < j \\ d_{i}\le d_{j}/2}}1.
\end{eqnarray*}
It follows that
$$
|\log(V(n))-S(n)| \ll \frac{\tau(n)^{2}\log n}{\sqrt{\Omega_{2}(n)}}+\tau(n)^{2}
$$
and the lower bound then follows from Lemma \ref{lem:2}. The simple upper bound follows from the observation that $\log(V(n)) \le S(n)$, which completes the proof.

{\sc Département de mathématiques et de statistique, Université Laval, Pavillon Alexandre-Vachon, 1045 Avenue de la Médecine, Québec, QC G1V 0A6} \\
{\it E-mail address:} {\tt Patrick.Letendre.1@ulaval.ca}

\end{document}